\documentclass[12pt]{article} 
\usepackage{cmap}
\usepackage{ucs}
\usepackage[utf8]{inputenc}
\usepackage[top=2.5cm,bottom=2.5cm,left=2.5cm,right=2.5cm]{geometry}
\usepackage{amsfonts,amsmath,amsthm,amscd,amssymb,latexsym}
\usepackage{mathtools}
\usepackage[english]{babel}
\usepackage[linkcolor=blue,colorlinks=true]{hyperref}

\usepackage{tikz}
\usetikzlibrary{arrows.meta,positioning,calc,decorations.pathreplacing}

\newtheorem{theorem}{Theorem}[section]

\newtheorem{proposition}[theorem]{Proposition}
\newtheorem{corollary}[theorem]{Corollary}

\theoremstyle{definition}

\mathtoolsset{showonlyrefs=true}

\title{\bf A non-existence result for vertex-girth-regular graphs}

\author{
Jorik Jooken \thanks{Department of Computer Science, KU Leuven Kulak, 8500 Kortrijk, Belgium.
}
\and Denys Lohvynov
 \thanks{National Technical University of Ukraine “Igor Sikorsky Kyiv Polytechnic Institute”, 37 Beresteiskyi Ave.,
03056 Kyiv, Ukraine.\\ Email addresses:
 \protect\href{mailto:jorik.jooken@kuleuven.be}{\protect\nolinkurl{jorik.jooken@kuleuven.be}} and
\protect\href{mailto:denislogvinov410@gmail.com}{\protect\nolinkurl{denislogvinov410@gmail.com}}}
}

\begin{document}

\maketitle

\begin{abstract}
A $k$-regular graph of girth $g$ is called \textit{vertex-girth-regular} if every vertex is contained in the same number of cycles of length $g$. For integers $n, k, g$ and $\lambda$, we denote such a graph on $n$ vertices in which every vertex lies on exactly $\lambda$ cycles of length $g$ by a $\text{vgr}(n,k,g,\lambda)$-graph. It is well-known that any vertex-girth-regular graph satisfies $\lambda \le \frac{k(k-1)^{\left\lfloor \frac{g}{2} \right\rfloor}}{2}$. Graphs for which $\lambda$ is close to this bound are of particular interest in connection with the cage problem, since requiring many girth cycles through every vertex is a natural way to isolate highly structured candidates for small regular graphs of prescribed girth. In this paper, we prove that for every $k\ge 3$ and every integer $0< \varepsilon \leq \frac{k-1}{2}$, there does not exist a $\text{vgr}(n,k,5,\frac{k(k-1)^2}{2}-\varepsilon)$-graph. Previous non-existence results had already settled all odd girths at least $7$ and very recently also girth $3$, leaving girth $5$ as the only girth for which no non-trivial non-existence result was known. Thus, our result resolves the final remaining case and completes the picture for odd girths.

			\vskip 3mm
		
				\noindent{\bf Keywords: Regular graphs, girth, classification, cage problem} \\
			\end{abstract}
            
\section{Introduction}

One of the central problems in extremal graph theory is the well-known \textit{cage problem}, which asks for given integers $k, g \geq 3$ for the order of the smallest $k$-regular graph of girth $g$. This smallest order is denoted by $n(k,g)$ and the corresponding extremal graphs are called \textit{$(k,g)$-cages}. This problem is notoriously difficult in general as the precise value of $n(k,g)$ is only known for a few small integers $k$ and $g$ as well as for infinitely many integers $k$ if $g \in \{3,4,6,8,12\}$~\cite{EJ12}. As a result, much of the attention in the literature has shifted towards establishing strong upper bounds for $n(k,g)$ by finding small $k$-regular graphs of girth $g$ (see e.g.~\cite{AABB17,AABL12,EGJSTV25,EJ12,LUW95}). For many small values of $k$ and $g$, the $(k,g)$-cages exhibit a lot of structure. Therefore, a useful heuristic to find small $k$-regular graphs of girth $g$ is to limit the search space by only considering graphs that exhibit such a structure. For example, edge-transitive and vertex-transitive graphs received a lot of attention both from a theoretical as well as a computational point of view, and this resulted in complete censuses of such graphs~\cite{HR20,J25,EncyclopediaOfGraphs,PSV13,PSV15}. While several of the smallest known $k$-regular graphs of girth $g$ come from such censuses, the heuristic of only considering edge-transitive or vertex-transitive graphs is sometimes too restrictive.

This observation was one of the motivations for Jajcay, Kiss and Miklavi{\v{c}}~\cite{JKM18} to introduce a new class of graphs called \textit{edge-girth-regular} graphs in 2018. For integers $n, k, g$ and $\lambda$, an $\text{egr}(n,k,g,\lambda)$-graph ($\text{egr}$ stands for edge-girth-regular) is a $k$-regular graph of girth $g$ on $n$ vertices such that each edge is contained in $\lambda$ cycles of length $g$. It is worth noting that every edge in an edge-transitive graph is contained in the same number of cycles of any length, while an edge-girth-regular graph is not necessarily edge-transitive. It is then natural to consider a new extremal question: For given integers $k, g$ and $\lambda$, what is the order $n$ of a smallest $\text{egr}(n,k,g,\lambda)$-graph? However, before posing this question, it is first necessary to ask whether such a graph exists at all. While $k$-regular graphs of girth $g$ exist for all integers $k, g \geq 3$~\cite{S63}, the same conclusion does not hold for edge-girth-regular graphs for all parameters $\lambda$. Indeed, Jajcay, Kiss and Miklavi{\v{c}}~\cite{JKM18} showed the following.

\begin{proposition}[Jajcay, Kiss and Miklavi{\v{c}}~\cite{JKM18}]
    Let $G$ be an $\text{egr}(n,k,g,\lambda)$-graph. Then $\lambda \leq (k-1)^{\lfloor \frac{g}{2}\rfloor}$.
\end{proposition}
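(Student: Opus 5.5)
Fix an edge $uv$ of $G$ and count girth cycles through it by following a cycle away from the edge. The idea is to inject the set of $g$-cycles containing $uv$ into a set of non-backtracking walks of length $\lfloor g/2\rfloor$ that start with the edge $uv$. A non-backtracking walk of length $1+m$ beginning $u,v,\dots$ is determined by its successive choices after $v$. Each step offers at most $k-1$ options, so there are at most $(k-1)^m$ such walks. We will take $m=\lfloor g/2\rfloor$ or $\lfloor g/2\rfloor-1$, depending on the parity of $g$.

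Let $C$ be a $g$-cycle through $uv$. Orient it so that it traverses $u\to v$, and write it as $u=x_0,v=x_1,x_2,\dots,x_{g-1},x_0$. Fix a canonical orientation so that each unoriented cycle gives exactly one walk; since $uv$ lies on $C$, choosing the direction $u\to v$ is a canonical choice.

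\textbf{Odd $g=2d+1$.} Map $C$ to the walk $x_0,x_1,\dots,x_{d+1}$, which contains $d$ free choices after $x_1$. We claim the map is injective. Suppose two distinct $g$-cycles $C,C'$ share this prefix. Their remaining paths from $x_{d+1}$ back to $x_0$ both have length $g-(d+1)=d$ and are distinct. Together they form a closed walk of length $2d<g$ that is not entirely backtracking, so it contains a cycle of length less than $g$, a contradiction. Hence $\lambda\le (k-1)^{d}=(k-1)^{\lfloor g/2\rfloor}$.

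\textbf{Even $g=2d$.} Map $C$ to the walk $x_0,\dots,x_{d+1}$. If two distinct cycles agree on it, their complementary paths from $x_{d+1}$ to $x_0$ have length $d-1$, and the same argument yields a short cycle. This gives $\lambda\le(k-1)^{d}$, which is again the claimed bound. In fact a slightly sharper count is possible: one could stop at $x_d$, where the two complementary paths of length $d$ give a closed walk of length $2d=g$. That case does not immediately give a contradiction, so we use the safer prefix of length $d+1$. Since the proposition claims only $(k-1)^{\lfloor g/2\rfloor}$, the count is complete.

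\textbf{Main obstacle.} The step needing care is the claim that two distinct paths of equal length $\ell$ between the same endpoints, with $2\ell<g$, force a cycle shorter than $g$. Let $P,P'$ be the two paths, and take the first vertex where they diverge and the next vertex after that where they meet again. The two subpaths between these vertices are internally disjoint, so their union is a genuine cycle of length at most $2\ell<g$, contradicting the girth. With that lemma in hand, the count of at most $k-1$ choices per step after $v$ (non-backtracking walks in a $k$-regular graph) gives the bound.
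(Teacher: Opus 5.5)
Your argument is correct. There is no in-paper proof to compare it with: the paper quotes this proposition from Jajcay, Kiss and Miklavi\v{c} as background.

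What you give is the standard self-contained count. A girth cycle through $uv$, oriented $u\to v$, is determined by its non-backtracking prefix $x_0,\dots,x_{\lfloor g/2\rfloor+1}$. If two distinct cycles shared this prefix, they would give two distinct $x_{\lfloor g/2\rfloor+1}$--$x_0$ paths of length $\lceil g/2\rceil-1$, and hence a cycle of length at most $2\lceil g/2\rceil-2<g$. Your divergence argument for that lemma is sound. Take the re-meeting vertex to be the first vertex of $P$ after the divergence point that lies on $P'$; it then automatically lies on $P'$ beyond the divergence point, so the two segments are internally disjoint. They cannot both be the single edge between those vertices, so their union is a genuine cycle. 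Equivalently, a girth cycle through $uv$ is determined by the vertex (odd $g$) or edge (even $g$) opposite $uv$ in the tree of non-backtracking walks from $uv$, which is the usual way to phrase this bound.

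There are two small expository points.
\begin{itemize}
\item Your opening paragraph speaks of walks ``of length $\lfloor g/2\rfloor$'' and of taking $m=\lfloor g/2\rfloor-1$ for one parity. In both parities you actually use walks of length $\lfloor g/2\rfloor+1$ with $m=\lfloor g/2\rfloor$, so align that sentence with what you do.
\item Delete the remark that a ``slightly sharper count is possible'' for even $g$, rather than hedging it. The bound $(k-1)^{g/2-1}$ is false: in $K_{k,k}$ (girth $4$) every edge lies on exactly $(k-1)^2$ four-cycles, so the even-girth bound is attained.
\end{itemize}
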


For integers $k, g$ and $\lambda$, let $n(k,g,\lambda)$ be the smallest order $n$ for which an $\text{egr}(n,k,g,\lambda)$ graph exists (or $\infty$ if no such graph exists). It is worth noting that for many integers $k, g$ and $\lambda$, we have $n(k,g,\lambda) \geq n(k,g,\lambda+1)$~\cite{GJ25}. Therefore, with respect to the cage problem, it is interesting to understand $n(k,g,\lambda)$ when $\lambda$ is close to the upper bound $(k-1)^{\lfloor \frac{g}{2}\rfloor}$. Jajcay, Kiss and Miklavi{\v{c}}~\cite{JKM18} established that an $\text{egr}(n,k,g,(k-1)^{\lfloor \frac{g}{2}\rfloor})$-graph can only exist in a very special case, namely if it is a Moore graph\footnote{The Moore bound $M(k,g)$ provides a lower bound for $n(k,g)$ and graphs achieving this bound are called Moore graphs. It is known that if $G$ is a $k$-regular Moore graph of girth $g$, then $k=2$ or $g \in \{3,4,6,8,12\}$ or $(k,g) \in \{(3,5),(7,5),(57,5)\}$.}.

The next natural thing is to investigate the situation where $\lambda$ is close to $(k-1)^{\lfloor \frac{g}{2}\rfloor}$, but strictly smaller. This situation is partially understood for two classes of graphs that are more general than edge-girth-regular graphs (and we complete the picture in the current paper). We will now introduce these two graph classes. Let $G$ be a $k$-regular graph of girth $g$ and for an edge $e \in E(G)$, let $n(e)$ denote the number of cycles of length $g$ that contain $e$. For a vertex $v \in V(G)$, let $e_1, e_2, \ldots, e_k$ be the $k$ edges incident with $v$ ordered such that $n(e_1) \leq n(e_2) \leq \ldots \leq n(e_k)$. Here, $(n(e_1), n(e_2), \ldots, n(e_k))$ is called the \textit{signature} of $v$. Using this notation, $G$ is called edge-girth-regular if $n(e)=n(e')=\lambda$ for all edges $e, e' \in E(G)$. The notion of edge-girth-regularity was generalized by Poto{\v{c}}nik and Vidali~\cite{PV19} as follows: we say that $G$ is \textit{girth-regular} if each pair of vertices in $G$ has the same signature. In turn, the notion of girth-regularity was further generalized by Jajcay, Jooken and Porups{\'a}nszki~\cite{JJP24} as follows: we call $G$ \textit{vertex-girth-regular} if every pair of vertices is contained in the same number of cycles of length $g$, i.e., the sums of the entries in the signature are equal. Similarly to edge-girth-regular graphs, we say that a $\text{vgr}(n,k,g,\lambda)$ graph is a $k$-regular graph of girth $g$ on $n$ vertices such that each vertex is contained in $\lambda$ cycles of length $g$. It is known for a girth-regular graph that again $n(e_k) \leq (k-1)^{\lfloor \frac{g}{2}\rfloor}$~\cite{PV19}, whereas for a vertex-girth-regular graph we have $\sum_{i=1}^{k}n(e_i) \leq k(k-1)^{\lfloor \frac{g}{2}\rfloor}$~\cite{JJP24}, i.e., every vertex is contained in at most $\frac{k(k-1)^{\lfloor \frac{g}{2}\rfloor}}{2}$ cycles of length $g$.

Kiss, Miklavi{\v{c}} and Sz{\H{o}}nyi~\cite{KMS22} established a non-existence result for girth-regular graphs of even girth.

\begin{theorem}[Kiss, Miklavi{\v{c}} and Sz{\H{o}}nyi~\cite{KMS22}]

Let $k, g \geq 3$ and $0 < \varepsilon < k-1$ be integers. There does not exist a $k$-regular girth-regular graph of even girth $g$ with signature $(n(e_1),n(e_2),\ldots,n(e_k))$ such that $n(e_k)=(k-1)^{\frac{g}{2}}-\varepsilon$. 
\end{theorem}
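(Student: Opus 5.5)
The plan is a local counting argument around a single edge realising the maximum count $n(e_k)$. Write $g=2d$ and suppose, for a contradiction, that $G$ is girth-regular with $n(e_k)=(k-1)^d-\varepsilon$ and $0<\varepsilon<k-1$. Fix an edge $uv$ with $n(uv)=(k-1)^d-\varepsilon$; such an edge exists at every vertex because all signatures coincide.

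\textbf{Step 1: the basic count.} Let $T_u$ be the tree of paths of length $d-1$ starting at $u$ and avoiding $v$, and let $T_v$ be defined symmetrically. Since the girth is $2d$, both trees are genuine trees, and $V(T_u)\cap V(T_v)=\emptyset$. Let $L_u$ and $L_v$ be their leaf sets. Then $|L_u|=|L_v|=(k-1)^{d-1}$. A $2d$-cycle through $uv$ is the same thing as an edge $xy$ with $x\in L_u$ and $y\in L_v$. This correspondence is a bijection, because the cycle is determined by $x$ and $y$. Each $x\in L_u$ has exactly $k-1$ edges leaving $T_u$. Each such edge lands in $L_v$ or elsewhere, and no two of them land in the same vertex of $L_v$, since otherwise a short cycle would appear. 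This gives the bound $(k-1)^d$ of the earlier proposition. Call an edge from $L_u$ that does not land in $L_v$ \emph{defective}. Then exactly $\varepsilon$ edges are defective on the $u$-side and exactly $\varepsilon$ on the $v$-side. In particular at least one leaf $x\in L_u$ carries a defective edge $xz$, and fewer than $k-1$ leaves do.

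\textbf{Step 2: locate the endpoints of defective edges.} I would pin down where $z$ lies.
\begin{itemize}
\item $z\notin T_u$, by the girth condition.
\item $z\notin L_v$, by definition of a defective edge.
\item $z$ is not an internal vertex of $T_v$ at depth less than $d-1$, since otherwise the cycle through $uv$ would be too short.
\end{itemize}
So $z$ lies outside $T_u\cup T_v$. Now count the edges from $L_v$ into $N(z)$. The neighbours of $z$ other than $x$ are at distance at least $d-1$ from $u$, and they interact with $L_v$ only in a constrained way.

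\textbf{Step 3: transfer to a neighbouring edge.} The aim is to show that defects occur in blocks of size $k-1$, so that $\varepsilon\equiv 0 \pmod{k-1}$, which contradicts $0<\varepsilon<k-1$. I would first try the following. Let $u'$ be the ancestor of $x$ adjacent to $u$, and apply the same analysis to the edge $uu'$ or to the maximal edge at $u'$. From this it should follow that the leaves of $T_v$ that fail to receive an edge from $L_u$ form whole sibling classes, meaning all $k-1$ children of some depth-$(d-2)$ vertex of $T_v$. This would come from comparing $n(uv)$ with the count for the edge from $v$ to the corresponding child.

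If that works, a single missing leaf forces all $k-1$ of its siblings to be missing, and so $\varepsilon\ge k-1$, the desired contradiction. If it fails, the fallback is a global double count. Sum $n(e)$ over all edges to obtain $n\lambda\cdot 2d/2$ on one side. On the other side, use that every vertex has a maximal edge with deficit $\varepsilon$. Combine this with $\sum_x(\text{defective edges at }x)$ counted from both endpoints to derive a divisibility by $k-1$.

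\textbf{Main obstacle.} Step 3 is the crux: getting the maximality of $n(uv)$, and the fact that every vertex has an edge with exactly this count, to force the block structure of the defects. Steps 1 and 2 are routine consequences of the girth.
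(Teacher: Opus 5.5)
The paper does not prove this statement; it only quotes it from Kiss, Miklavi\v{c} and Sz\H{o}nyi. So I judge your proposal on its own, and it has a genuine gap.

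Steps 1 and 2 are correct, but they only reprove $n(uv)\le (k-1)^d$ and set up the bookkeeping of the $\varepsilon$ defective edges on each side. The whole content of the theorem is in Step 3, which you do not carry out (``it should follow'', ``if that works'', ``the fallback'').

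Worse, the goal you set for Step 3 is false: one cannot show in general that $(k-1)\mid\varepsilon$. Replace every vertex of $K_5$ by a $4$-cycle, each of its four vertices taking one of the original edges. Consider a cycle using $t\ge 1$ inter-cluster edges. Since $K_5$ is simple, $t\ge 3$. Since each vertex has only one inter-cluster edge, the cycle also uses at least one cluster edge per cluster visited. So it has length at least $6$. Hence the graph is cubic of girth $4$, its only $4$-cycles are the five clusters, and every vertex has signature $(0,1,1)$. This girth-regular graph has $n(e_3)=1=(3-1)^2-3$, i.e.\ $\varepsilon=3$, which is not divisible by $k-1=2$. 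Both your block argument in its divisibility form and the global double-count fallback therefore aim at a false statement. The hypothesis $\varepsilon<k-1$ must be used in an essential way, not only in the last line.

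Moreover, the one concrete comparison you name (``comparing $n(uv)$ with the count for the edge from $v$ to the corresponding child'') uses only the maximality of $n(uv)$. Maximality alone cannot force your sibling-class structure. Take two copies of $K_{3,3}-a_3b_3$ and add the edges $a_3b_3'$ and $a_3'b_3$. This is a cubic bipartite graph of girth $4$ whose largest edge count is $3=(k-1)^2-1$, attained at $uv=a_1b_1$. There the defective leaf $a_3\in L_v$ sits next to the non-defective sibling $a_2$. So the contradiction must come from the equality of signatures at some vertex other than $u,v$. You allude to this only with ``or to the maximal edge at $u'$'', and that argument is exactly what is missing.

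Here is one legitimate first use of $\varepsilon<k-1$. The at most $\varepsilon$ defective leaves of $L_v$ must miss one of the $k-1$ branches, say the branch through $v_j$. The $(k-1)^{d-1}$ forward neighbours of the leaves below $v_j$ are pairwise distinct by girth, so they are exactly $L_u$. Hence the length-$d$ paths from $v$ starting with $vv_j$ and with $vu$ end in the same set, which gives $n(vv_j)=n(uv)$. Turning such structural facts into a vertex whose signature must differ from that of $v$ is the real work, and your proposal gives no mechanism for it.
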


In a similar vein, Jajcay, Jooken and Porups{\'a}nszki~\cite{JJP24} established such a non-existence result for vertex-girth-regular graphs, but this time only for odd girths that are at least $7$.

\begin{theorem}[Jajcay, Jooken and Porups{\'a}nszki~\cite{JJP24}]
Let $k\geq 3$, $g \geq 7$ and $0 < \varepsilon \leq \frac{k-1}{2}$ be integers. There does not exist a $\text{vgr}(n,k,g,\frac{k(k-1)^{\lfloor \frac{g}{2}\rfloor}}{2}-\varepsilon)$-graph where $g$ is odd.
\end{theorem}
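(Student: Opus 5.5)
Throughout, write $g=2d+1$ with $d\ge 3$. The plan is a local counting argument around a single vertex: first reformulate the deficit $\varepsilon$ as a count of ``missing'' edges in the distance-$d$ sphere, then look at the same configuration from a second vertex and show that it has too many missing edges.

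\emph{Step 1: reformulating $\lambda$.} Fix a vertex $v$ and let $S_i(v)$ be the set of vertices at distance $i$ from $v$. Since $g=2d+1$, the ball of radius $d$ around $v$ induces a tree. Hence $|S_d(v)|=k(k-1)^{d-1}$, every $u\in S_d(v)$ has exactly one neighbour in $S_{d-1}(v)$, and $u$ lies in a unique branch, namely the subtree below a unique neighbour of $v$. A $g$-cycle through $v$ is then the same thing as an edge $uw$ with $u,w\in S_d(v)$. Such an edge automatically joins different branches, since an edge inside one branch would close a cycle shorter than $g$. Therefore
\[
\lambda=\tfrac12\sum_{u\in S_d(v)}|N(u)\cap S_d(v)|=\tfrac{k(k-1)^d}{2}-\tfrac12\,\delta(v),
\]
where $\delta(v)$ is the number of edges from $S_d(v)$ to $S_{d+1}(v)$. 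The hypothesis says exactly that $\delta(v)=2\varepsilon$ for every $v$, so $1\le 2\varepsilon\le k-1$. In particular every vertex $v$ has some vertex $w\in S_{d+1}(v)$, and every vertex has at most $2\varepsilon\le k-1$ vertices at distance $d+1$.

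\emph{Step 2: the key configuration.} Take $w\in S_{d+1}(v)$ and let $a\ge1$ be the number of neighbours of $w$ in $S_d(v)$. For a neighbour $y$ of $v$, a $y$--$w$ path of length $d$ must increase the level (distance from $v$) by one at each step. So $\mathrm{dist}(y,w)=d$ exactly when the branch of $y$ contains a neighbour of $w$, and otherwise $\mathrm{dist}(y,w)\ge d+1$. Because branches are disjoint, at most $a$ neighbours of $v$ lie in $S_d(w)$; also $v\in S_{d+1}(w)$, and the remaining at least $k-a$ neighbours of $v$ lie at distance $\ge d+1$ from $w$.

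\emph{Step 3: counting at $w$.} We now count $\delta(w)$. The vertex $v$ is joined to exactly $a$ vertices of $S_d(w)$, which gives $a$ edges from $S_d(w)$ to $S_{d+1}(w)$. Each neighbour $y'$ of $v$ with $\mathrm{dist}(y',w)=d+1$ has at least one neighbour in $S_d(w)$. That neighbour is not $v$, so these edges are distinct from the previous $a$ and from each other (they have distinct endpoints $y'$). If all $k-a$ remaining neighbours of $v$ are at distance exactly $d+1$ from $w$, we get $\delta(w)\ge a+(k-a)=k>k-1\ge 2\varepsilon$, contradicting Step 1.

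\emph{Main obstacle.} The step I expect to be hardest is excluding neighbours $y'$ of $v$ with $\mathrm{dist}(y',w)=d+2$. The first thing I would try is an extremal choice: among all pairs at distance at least $d+1$, pick one at maximum distance, or pick $v$ and then $w$ so that $w$ has as few neighbours as possible in $S_d(v)$. One then checks that a vertex at distance $d+2$ would create, in the sphere of some vertex, at least $k$ vertices at distance $d+1$, each contributing its own outgoing edge, which again forces $\delta\ge k>2\varepsilon$. Alternatively, the first step would be to show that every component has diameter $d+1$. Apply the Step 1 bound (at most $k-1$ vertices at distance $d+1$) to the vertex $x\in S_{d+1}(v)$ on a shortest path to a putative $z\in S_{d+2}(v)$. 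Then $x$ has a neighbour in $S_{d+2}(v)$, so $x$ has at most $k-1$ neighbours in $S_d(v)\cup S_{d+1}(v)$; the Step 2 analysis from $x$'s viewpoint then again yields $k$ distinct edges leaving $S_d(x)$. Once this point is settled, the count in Step 3 gives the contradiction directly, and the argument uses $d\ge3$ only through the tree structure of the radius-$d$ ball.
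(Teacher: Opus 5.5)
The paper itself gives no proof of this theorem; it only cites it from \cite{JJP24}. So I am judging your plan on its own terms, and it has a genuine gap.

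Steps~1--3 are correct as far as they go. One slip: the ball of radius $d$ does not induce a tree, because the edges inside $S_d(v)$ are exactly the girth cycles. What you actually use is that each vertex of $S_i(v)$, $1\le i\le d$, has a unique neighbour in $S_{i-1}(v)$, and that is true. The real problem is that the case you close in Step~3 never occurs. Split $N(v)=A\cup B\cup C$ according to whether a neighbour of $v$ is at distance $d$, $d+1$ or $d+2$ from $w$, with $|A|=a$. Your own count gives $2\varepsilon=\delta(w)\ge a+|B|=k-|C|$. Hence $|C|\ge k-2\varepsilon\ge 1$ for \emph{every} pair $v,w$ at distance $d+1$ in a putative counterexample. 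Steps~2--3 therefore only yield a structural consequence: such a graph has diameter at least $d+2$, and for every pair at distance $d+1$, at least $k-2\varepsilon$ neighbours of $v$ lie at distance $d+2$ from $w$. What you call the main obstacle is not a leftover case but essentially the whole theorem, and the proposal gives no argument for it.

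The remedies you sketch do not close the gap.
\begin{itemize}
\item No extremal choice among pairs at distance $d+1$ can help, because $|C|\ge k-2\varepsilon$ holds for all of them. Pairs at larger distance are outside the reach of Steps~2--3.
\item Showing that components have diameter $d+1$ is equivalent to the theorem, by the above. Your sketch of it is circular. Step~2 applied to the pair $(x,v)$ meets the same obstruction: neighbours of $v$ at distance $d+2$ from $x$. Meanwhile the neighbour $z\in S_{d+2}(v)$ of $x$ lies in $S_1(x)$ and contributes nothing to $\delta(x)$.
\item The assertion that a vertex at distance $d+2$ forces $k$ vertices at distance $d+1$ in some sphere is not backed by any count.
\end{itemize}
Your closing remark is itself a warning sign. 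The unique-parent structure holds for every odd girth, including $g=5$. So nothing in the plan uses $g\ge 7$, and if it worked it would also settle the $g=5$ case, which this paper needs a long argument for.

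What is missing is a genuine lower bound on $\delta(w)$, that is, an upper bound on the number of edges inside $S_d(w)$. It must also account for edges into vertices of $S_{d+1}(w)$ other than $v$ and the vertices of $B$. It must exploit the fact that only $2\varepsilon$ edges leave $S_d(v)$ to constrain how $S_d(w)$ is wired. The paper's proof of the $g=5$ case shows the shape of such an argument, with $u$ in the role of your $v$ and $v$ in the role of your $w$:
\begin{itemize}
\item It first proves \eqref{eq:main_property}, i.e.\ $a=1$.
\item It then splits $N_2(v)$ into $V_A,V_B,V_C$ by position relative to $u$ and charges edges to the $2\varepsilon$ edges leaving $N_2(u)$.
\item It needs the separate refined estimate \eqref{eq:V_C_induced_new} for the part $V_C$ of $N_2(v)$ lying outside the ball of radius $2$ around $u$.
\end{itemize}
That last part is exactly the accounting your count omits.
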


This leaves the cases $g=3$ and $g=5$ open. Recently, the case $g=3$ was solved by Hak, Kozerenko, Lohvynov and Yarosh~\cite{HKLY26}. In the current paper, we solve the final remaining case $g=5$ by a careful structural analysis of the number of cycles of length $5$ containing appropriately chosen vertices in a $\text{vgr}(n,k,5,\lambda)$-graph. Hence, this leads to the following complete picture for odd girths.

\begin{corollary}
Let $k\geq 3$, $g \geq 3$ and $0 < \varepsilon \leq \frac{k-1}{2}$ be integers. There does not exist a $\text{vgr}(n,k,g,\frac{k(k-1)^{\lfloor \frac{g}{2}\rfloor}}{2}-\varepsilon)$-graph where $g$ is odd.
\end{corollary}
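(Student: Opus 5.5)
The corollary follows by splitting on the value of the odd girth $g$ and invoking the appropriate non-existence result in each case. I would check first that the hypotheses line up: in every case $k\ge 3$ and $0<\varepsilon\le\frac{k-1}{2}$ are integers, and the forbidden value of $\lambda$ is $\frac{k(k-1)^{\lfloor g/2\rfloor}}{2}-\varepsilon$. The odd girths split into three cases. For $g\ge 7$ the statement is exactly the theorem of Jajcay, Jooken and Porups{\'a}nszki~\cite{JJP24} quoted above. For $g=3$ it is the result of Hak, Kozerenko, Lohvynov and Yarosh~\cite{HKLY26}. For $g=5$ it is the main theorem of the present paper, namely that no $\text{vgr}(n,k,5,\frac{k(k-1)^2}{2}-\varepsilon)$-graph exists. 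Since these three cases exhaust all odd $g\ge 3$, the corollary follows.

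All of the content therefore sits in the $g=5$ case, which I would prove by counting $5$-cycles around a fixed vertex $v$. Girth $5$ makes the ball of radius $2$ around $v$ a tree. Hence $N_2(v)$ has exactly $k(k-1)$ vertices, and each has $k-1$ edges leaving the tree. There are no edges inside $N_1(v)$. An edge inside $N_2(v)$ whose endpoints hang from the same neighbour of $v$ would create a triangle, so it is impossible. Every $5$-cycle through $v$ consists of two distinct neighbours $u,u'$ of $v$, children $x$ of $u$ and $x'$ of $u'$, and an edge $xx'$. Consequently the number of $5$-cycles through $v$ equals the number of edges inside $N_2(v)$. If $d$ denotes the number of edges from $N_2(v)$ to $N_3(v)$, counting endpoints gives
\begin{equation}
\lambda=\frac{k(k-1)^2-d}{2}.
\end{equation}
So $\lambda=\frac{k(k-1)^2}{2}-\varepsilon$ forces $d=2\varepsilon$ for every vertex $v$, with $0<2\varepsilon\le k-1$. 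In particular $N_3(v)$ is non-empty but meets at most $k-1$ edges from $N_2(v)$.

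The main step, and the one I expect to be the obstacle, is to derive a contradiction from this uniform local picture by applying the same identity at a cleverly chosen second vertex. My first attempt would take $w\in N_3(v)$, or a vertex $x\in N_2(v)$ incident to an outgoing edge, and compute $d$ at that vertex. Its second neighbourhood lies largely inside $N_1(v)\cup N_2(v)\cup N_3(v)$. Because $d(v)\le k-1$ severely limits how many edges enter $N_3(v)$, I would show that the second neighbourhood of $w$ (or $x$) must send strictly more than $k-1$ edges to its own third neighbourhood. Alternatively I would show it sends a number of edges of the wrong parity or size, contradicting $d=2\varepsilon$. The comparison would use that the neighbours of $w$ in $N_2(v)$ number at most $2\varepsilon$, and that the children of such neighbours cannot all close up into $5$-cycles through $w$ without creating short cycles near $v$.

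Should a single vertex not suffice, I would sum $d$ over all vertices of $N_3(v)$, or over the endpoints in $N_2(v)$ of outgoing edges, and double-count edges to obtain the contradiction. The bound $\varepsilon\le\frac{k-1}{2}$ should enter exactly at this point.
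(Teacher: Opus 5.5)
Your case split is exactly how the paper obtains the corollary. The case $g\ge 7$ is the theorem of Jajcay, Jooken and Porups{\'a}nszki, $g=3$ is the result of Hak, Kozerenko, Lohvynov and Yarosh, and $g=5$ is the paper's main theorem, so your first paragraph is a correct and complete proof. Everything after it is not needed here, and as written it is only a plan, not a proof of the $g=5$ case. Your identity $d=2\varepsilon$ matches the paper's starting point, but the paper needs the structural step you leave open: it first rules out a vertex of $N_3(u)$ with two neighbours in $N_2(u)$, including delicate equality cases when $\varepsilon=\frac{k-1}{2}$, and then recounts $5$-cycles at a vertex of $N_3(u)$ with a single neighbour in $N_2(u)$.
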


\section{The result}

\begin{theorem}
Let $k\geq 3$ and $0 < \varepsilon \leq \frac{k-1}{2}$ be integers. There does not exist a $\text{vgr}(n,k,5,\frac{k(k-1)^{2}}{2}-\varepsilon)$-graph.
\end{theorem}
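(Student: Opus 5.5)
The plan is to turn the count of $5$-cycles through a vertex into a count of geodesic paths of length $3$, and then show that a single vertex near a ``defect'' must have too many such geodesics.

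\textbf{Reformulation.} Fix a vertex $v$ and let $N_i(v)$ be the set of vertices at distance $i$ from $v$. Since the girth is $5$, $|N_1(v)|=k$ and $|N_2(v)|=k(k-1)$, and there are no edges inside $N_1(v)$. Every $5$-cycle through $v$ has the form $v\,a\,b\,c\,d$ with $a,d\in N_1(v)$, $b,c\in N_2(v)$ and $bc$ an edge. Conversely, every edge inside $N_2(v)$ joins vertices with distinct parents, so it gives exactly one such cycle. Hence the number of $5$-cycles through $v$ equals $e(N_2(v))$. Each vertex of $N_2(v)$ has $k-1$ edges not going back to $N_1(v)$, and these go either into $N_2(v)$ or into $N_3(v)$. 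Therefore
\[
\frac{k(k-1)^2}{2}-\lambda(v)=\frac{D(v)}{2},
\]
where $D(v)$ is the number of edges between $N_2(v)$ and $N_3(v)$. By girth $5$, $D(v)$ is exactly the number of geodesic paths of length $3$ starting at $v$. So a $\text{vgr}(n,k,5,\frac{k(k-1)^2}{2}-\varepsilon)$-graph is precisely a girth-$5$, $k$-regular graph in which every vertex $v$ has $D(v)=2\varepsilon$, with $1\le D(v)\le k-1$. The aim is to find a vertex $w$ with $D(w)\ge k$.

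\textbf{Local bookkeeping.} For $x\in N_2(v)$ write $t_x$ for the number of neighbours of $x$ in $N_2(v)$ and $s_x=k-1-t_x$ for the number in $N_3(v)$, so $\sum_x s_x=D(v)$. Since $D(v)>0$, there is an edge $uw$ with $u\in N_2(v)$ and $w\in N_3(v)$; let $a\in N_1(v)$ be the parent of $u$. Two elementary facts will be used repeatedly, both consequences of having no $3$- or $4$-cycles:
\begin{itemize}
\item a vertex is adjacent to at most one neighbour of any other vertex at distance $2$ from it;
\item two vertices of $N_2(v)$ with the same parent are never adjacent.
\end{itemize}
First I will check the sample computation that $v$ contributes exactly $s_u$ geodesics to $D(u)$, and that the branch through $v$ contributes exactly $\sum_{x\text{ child of }a} s_x$ to $D(a)$. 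Then I will look at $w$. Here $v\in N_3(w)$, since no neighbour of $w$ is adjacent to $v$, so every path $w\,u_i\,a_i\,v$ with $u_i\in N(w)\cap N_2(v)$ is a geodesic. Next consider the $k-1$ neighbours of $a$ other than $u$, namely $v$ and the other children of $a$. Each of them is at distance $3$ from $w$ unless it is adjacent to one of the $k-1$ neighbours of $w$ other than $u$. Moreover, each neighbour of $w$ covers at most one neighbour of each such $a_i$. Summing over all neighbours $u_i$ of $w$ in $N_2(v)$ with parents $a_i$ gives $r(k-1)$ second-neighbour slots that must be covered, where $r=|N(w)\cap N_2(v)|$. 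These are to be covered by only $k-r$ neighbours of $w$ outside $N_2(v)$ plus the $r$ vertices $u_i$ themselves. Every uncovered slot adds a geodesic to $D(w)$.

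\textbf{Main obstacle.} The key step, and the one I expect to be hardest, is showing that these uncovered slots, together with the forced geodesic(s) to $v$, number at least $k$. The naive count gives $D(w)\ge r$ plus slack. The difficulty is that neighbours of $w$ lying in $N_2(v)$ can cover slots in several branches simultaneously. To handle this I would first treat the extremal case where $w$ has many neighbours in $N_2(v)$, using that they have pairwise distinct parents (by the no-$4$-cycle fact). I would then combine $D(w)\le k-1$ with the symmetric bound $D(u)\le k-1$, choosing the edge $uw$ so that $s_u$ is maximal, to force a double-count contradiction.

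If this purely local argument falls short, the fallback is a global identity. Summing $D$ over $N_1(v)$ gives
\[
\sum_{a\in N_1(v)} D(a)=2\varepsilon k,
\]
while the branch decomposition shows
\[
\sum_{a\in N_1(v)} D(a)\ \ge\ D(v)+\sum_{uw}\bigl(\text{contribution of }uw\bigr),
\]
where the sum runs over the edges $uw$ from $N_2(v)$ to $N_3(v)$. I would then use $2\varepsilon\le k-1$ to bound the right-hand side from below by more than $2\varepsilon k$.
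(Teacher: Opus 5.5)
\textbf{There is a genuine gap: the proposal never proves the key inequality, and the one mechanism it describes cannot supply it.}

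Your reformulation is correct and is the paper's starting identity. Every vertex $x$ has $D(x)=2\varepsilon$, and the paper's goal $2Y<k(k-1)^2-2\varepsilon$ for the number $Y$ of $5$-cycles at a vertex is the same as $D>2\varepsilon$ there. Your test vertex, $w\in N_3(v)$ adjacent to $u\in N_2(v)$, is also the paper's.

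The proposal stops where the proof has to start. The inequality for $D(w)$ is labelled the ``main obstacle'' and only candidate strategies are offered. The fallback is not an argument either. By your own computation, the geodesics from the vertices $a\in N_1(v)$ that pass through $v$ contribute exactly $\sum_x s_x=D(v)$ to $\sum_a D(a)=2\varepsilon k$. So you would need the geodesics avoiding $v$ to total more than $2\varepsilon(k-1)$, and nothing is said about them.

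More seriously, the slot mechanism cannot succeed by itself:
\begin{itemize}
\item Your slots are the pairs $(i,z)$ with $z\in N(a_i)\setminus\{u_i\}$. The forced geodesics to $v$ are among them ($z=v$), so slots yield at most $r(k-1)$ geodesics.
\item The case $r=1$ cannot be avoided. Indeed, the paper's first step shows that $r\ge 2$ is itself contradictory.
\item With $r=1$ you get at most $k-1$ geodesics, never $k$. For $\varepsilon=\frac{k-1}{2}$ this is not even more than $2\varepsilon$.
\item Your covering count never uses the one scarce resource, the $2\varepsilon$ edges between $N_2(v)$ and $N_3(v)$. That is why it yields only $D(w)\ge r$.
\end{itemize}

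\textbf{What is missing.} In your notation, the paper's proof has three ingredients.

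(i) No vertex of $N_3(v)$ has two neighbours in $N_2(v)$. The paper proves this by a separate upper bound on the $5$-cycles through such a vertex, with delicate equality cases when $\varepsilon=\frac{k-1}{2}$.

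(ii) A per-branch budget: for every $a\in N_1(v)$, the children of $a$ send at most $\varepsilon$ edges to $N_3(v)$. To see this, take the multigraph on $N_1(v)$ whose edge multiplicities count edges between the children of two parents; each multiplicity is at most $k-1$. Its total deficit is $\varepsilon$. The deficit at $a$, which is the number of edges from the children of $a$ to $N_3(v)$, is at most that total. In particular, $u$ has at most $\varepsilon-1$ neighbours in $N_3(v)$ other than $w$.

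(iii) The bulk of $D(w)$ comes from the branches of $w$ pointing away from $v$, which your analysis never examines. For each neighbour $x_i\ne u$ of $w$, set $L_i=N(x_i)\setminus(\{w\}\cup N_2(v))$.
\begin{itemize}
\item By (i), $|L_i|\ge k-2$.
\item Each $L_i$ is independent, and a vertex of $L_i$ has at most one neighbour in each $L_j$.
\item Hence every vertex of $L_i$ has at least one further edge. It goes to a neighbour of $u$ in $N_3(v)$, into $N_2(v)$, or out of $N_2(w)$.
\item The first two kinds of edges are capped by (ii) and by the total budget $2\varepsilon$.
\end{itemize}
The paper turns this count into $D(w)\ge (k-1)(k+1-\varepsilon)-\varepsilon>2\varepsilon$, which is the contradiction.
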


\begin{proof}
    Let us assume that there is such a graph $G$ for the sake of obtaining a contradiction. 
    
    We can readily note that $\forall v \in V(G): |N_2 (v)| = k(k-1)$ and the number of $5$-cycles containing a vertex $v$ is exactly the number of edges within $N_2(v)$, i.e. $|E(N_2(v), N_2(v))|$, since $G$ has girth $5$. We can also note that $\forall u \in V(G):$
    \begin{equation}\label{eq:outerEdges}
     |E(N_2(u), V\setminus(N(u)\cup N_2(u)))| = 2\varepsilon,   
    \end{equation}
     since
    \begin{multline}
      (k-1)\underbrace{|N_2(u)|}_{k(k-1)} = \sum_{w \in N_2(u)} |E(\{w\}, V(G)\setminus N(u))| \\ = 2\underbrace{|E(N_2(u), N_2(u))|}_{\frac{k(k-1)^2}{2}-\varepsilon} + |E(N_2(u), V\setminus(N(u)\cup N_2(u)))|.  
    \end{multline}

        Let us start by proving that 
        \begin{equation}\label{eq:main_property}
    \forall u \in V(G) \forall v', v'' \in N_2(u) (v'\neq v''): N(v')\cap N(v'') \subset N(u) \cup N_2(u).        
        \end{equation}

    Let us assume that it is not the case; then there must be a vertex $u \in V(G)$ as well as $v', v'' \in N_2(u)$ and $v \in V(G)\setminus (N(u)\cup N_2(u))$ such that $v'v, v''v \in E(G)$.

    What we want to do now is to bound from above the number of $5$-cycles containing $v$, denoting this number by $Y$. To that end, we shall introduce several sets, namely
    \begin{align}
     V_A &= N_2(v)\cap N(u),\\
     V_B &= N_2(v)\cap N_2(u),\\
     V_C &= N_2(v)\setminus(N(u) \cup N_2(u)).
    \end{align}
    Note that $|V_A|\geq 2$. Thus, we can see that
    \begin{multline}
      Y = |E(N_2(v), N_2(v))|  = \underbrace{|E(V_A, V_A)|}_{0} + |E(V_A, V_B)| \\ + \underbrace{|E(V_A, V_C)|}_{0} + |E(V_B, V_B)| + |E(V_B, V_C)| + |E(V_C, V_C)| \\  =  |E(V_A, V_B)|  + |E(V_B, V_B)| + |E(V_B, V_C)| + |E(V_C, V_C)|.   
    \end{multline}

    Also, note that 
    \begin{multline}\label{eq:V_C_induced}
     (k-1)|V_C| = \sum_{w \in V_C} |E(\{w\}, V(G)\setminus N(v))| \geq \sum_{w \in V_C} |E(\{w\}, V_C  \cup V_B)| \\ = 2 |E(V_C, V_C)| + |E(V_B, V_C)|   
    \end{multline}
    and that 
    \begin{multline}\label{eq:V_B_induced}
     (k-1)|V_B| = \sum_{w \in V_B} |E(\{w\}, V(G)\setminus N(v))| \\ \geq \sum_{w \in V_B} |E(\{w\}, V_C  \cup V_B \cup V_A)| \\ 
     = |E(V_A, V_B)| + 2 |E(V_B, V_B)| + |E(V_B, V_C)|.   
    \end{multline}

Now, let us focus on $|E(V_A, V_B)|$. To that end we shall introduce several new notions:
\begin{itemize}
    \item $N(u) = \{u_1, ..., u_k\}$, and for each $i \in \{1,2,\ldots,k\}$, we define $V_2(i) = N(u_i) \setminus \{u\}$; 
    \item $ d_i = |\{e = x y \in E: x \in V_2(i), y \in N(v) \setminus N_2(u) \}|$, namely $d_i$ is the number of edges connecting $V_2(i)$ and a vertex connected to $v$ but not in $N_2(u)$;
    \item  $a_i = 1_{N(v) \cap V_2 (i) \neq \emptyset}$, namely $a_i$ indicates whether there is an edge from $v$ to $V_2(i)$. Note that if there is such an edge, it is unique because $G$ has girth $5$.
\end{itemize}

\begin{figure}[ht]
\centering
\begin{tikzpicture}[
    x=1.3cm,y=1cm,
    block/.style={draw,rounded corners,minimum width=1.0cm,minimum height=.55cm},
    dot/.style={circle,fill,inner sep=1.4pt},
    >=Latex,scale=0.72
]
    \node[dot,label=above:{\large $u$}] (u) at (0,0) {};

    \node[dot,label=above left:{\large $u_1$}] (uPrime) at (-3,-2) {};
    \node[dot,label=above left:{\large $u_2$}] (u1) at (0,-2) {};
    \node[dot,label=above right:{\large $u_3$}] (u2) at (3,-2) {};

    \node[dot,label=above left:{\large $v'$}] (vPrime) at (-5,-4) {};
    \node[dot] (v12) at (-3,-4) {};

    \node[dot,label=above left:{\large $v''$}] (v21) at (-1,-4) {};
    \node[dot] (v22) at (1,-4) {};

    \node[dot] (v31) at (3,-4) {};
    \node[dot] (v32) at (5,-4) {};
    
    \draw (u) -- (uPrime);
    \draw (u) -- (u1);
    \draw (u) -- (u2);

    \draw (uPrime) -- (vPrime);
    \draw (uPrime) -- (v12);

    \draw (u1) -- (v21);
    \draw (u1) -- (v22);

    \draw (u2) -- (v31);
    \draw (u2) -- (v32);

    \node[dot,label=above:{\large $v$}] (v) at (-4,-6) {};
    \draw (v) -- (vPrime);

    \node[dot] (v2) at (-2,-8) {};
    \draw (v) -- (v2);


    
    \node[dot] (v2Son1) at (-2,-10) {};
    \draw (v2) -- (v2Son1);


    \draw (v2) -- (v31);
    \draw (vPrime) to[bend right=22] (v22);

    \draw (v) -- (v21);

    \node[block, minimum width=2.7cm, minimum height=0.8cm, label=above:{\large $V_{B}$}] (D)  at (2,-4) {};


    \node[block, minimum width=1.0cm, minimum height=1.0cm, label=below:{\large $V_{C}$}] at (v2Son1) {};

    \node[block, minimum width=4.0cm, minimum height=1.3cm, label=above left:{\large $V_{A}$}] at (-1.7,-2) {};

    \node[block, minimum width=3.0cm, minimum height=1.2cm, label=above left:{\large $V_2(1)$}] at (-4.1,-3.9) {};
    

\end{tikzpicture}
\caption{An illustration of the notation used for the case where at least two vertices of $N_2(u)$ are adjacent to $v$. The displayed graph is a subgraph of $G$, so not all vertices and edges are visualized. If we assume that $k=3$, we can derive for this example that $d_1=0$, $d_2=0$, $d_3=1$, $a_1=1$, $a_2=1$ and $a_3=0$.}
\label{fig:illustrateNotation1}
\end{figure}

An example that illustrates these variables is given in Fig.~\ref{fig:illustrateNotation1}. Having introduced these variables, we want to note several identities.  

The first is $$|V_A| = \sum_i a_i,$$ which holds since $a_i = 1 $ if and only if $u_i \in N_2(v),$ for $u_i \in N(u)$. 

The second is 
\begin{equation}\label{eq:V_Ag}
  \sum_{i=1}^k d_i + |V_A| = \sum_{i=1}^k (d_i + a_i) \leq 2\varepsilon,  
\end{equation}
which holds since 
\begin{multline}
 2\varepsilon = |E(N_2(u), V\setminus(N(u)\cup N_2(u)))| = \sum_{i=1}^k |E(V_2(i), V\setminus(N(u)\cup N_2(u)))| \\ \geq \sum_{i=1}^k \underbrace{|E(V_2(i), N(v)\setminus N_2(u))|}_{d_i} + \underbrace{|E(V_2(i), \{v\})|}_{a_i}.    
\end{multline}
Since $a_i \in \{0,1\}$, we also have 
\begin{equation}\label{eq:2epsV_A}
    \sum_{i=1}^k d_i  a_i \leq 2\varepsilon - |V_A|.
\end{equation}

And finally, we have that
\begin{equation}\label{eq:E_AB}
|E(V_A, V_B)| \leq  \sum_{i=1}^k d_i  a_i + |V_A|(|V_A| - 1).    
\end{equation}
%
This inequality holds because any edge between $V_A$ and $V_B$ connects some $u_i \in V_A$ (where $a_i = 1$) to a vertex in $V_B \cap V_2(i)$. We can partition these endpoints based on the location of their unique neighbor in $N(v)$:
\begin{multline}
|E(V_A, V_B)| = \sum_{i=1}^k a_i |E(\{u_i\}, V_B\cap V_2(i))| = \sum_{i=1}^k a_i |V_B \cap V_2(i)| \\ 
= \sum_{i=1}^k a_i \Big( \underbrace{|\{w \in V_B \cap V_2(i) : N(w) \cap N(v) \not\subset N_2(u)\}|}_{\leq d_i} \\ 
+ \underbrace{|\{w \in V_B \cap V_2(i) : N(w) \cap N(v) \subset N_2(u)\}|}_{\leq |V_A| - 1} \Big).
\end{multline}

Returning to $Y$, by virtue of \eqref{eq:V_C_induced},~\eqref{eq:V_B_induced},~\eqref{eq:2epsV_A} and \eqref{eq:E_AB} we see that 
\begin{multline}
    2Y \leq (k-1)(\underbrace{|V_C| + |V_B|}_{k(k-1)-|V_A|}) + \sum_{i=1}^k d_i a_i + |V_A|(|V_A|-1) \\ \leq k(k-1)^2 -(k-1)|V_A| +2\varepsilon - |V_A| + |V_A|(|V_A| - 1).
\end{multline}
Note that from \eqref{eq:V_Ag} we can also see that $2 \leq |V_A| \leq 2\varepsilon$. Thus 
\begin{equation}
    2Y \leq k(k-1)^2 + 2\varepsilon + \max(2 - 2k, 4\varepsilon^2 - 2\varepsilon(k+1)).
\end{equation}
Also note that $2\varepsilon + 2 - 2k \leq - 2\varepsilon$ and that $2\varepsilon +4\varepsilon^2 - 2\varepsilon(k+1) = \underbrace{2\varepsilon(2\varepsilon - (k-1))}_{\leq0} - 2\varepsilon \leq -2\varepsilon$. Therefore, we have $2Y<k(k-1)^2-2\varepsilon$ for $\varepsilon < \frac{k-1}{2}$, since in that case the previous inequalities are strict.

Let us consider the case of $\varepsilon = \frac{k-1}{2}$ separately. In this case, if $2<|V_A| < 2\varepsilon$, then $2Y<k(k-1)^2-2\varepsilon$. As such, we have to consider only two values of $|V_A|$, namely $|V_A| = 2$ or $|V_A| = 2\varepsilon = k-1$.

Consider the $|V_A|=2\varepsilon = k-1$ case. By way of contradiction, assume that $2Y = k(k-1)^2-2\varepsilon$, instead of being strictly less than $k(k-1)^2-2\varepsilon$, which would imply that inequalities \eqref{eq:V_C_induced},~\eqref{eq:V_B_induced},~\eqref{eq:V_Ag},~\eqref{eq:2epsV_A} and \eqref{eq:E_AB} must all in fact  be equalities. We shall only focus on \eqref{eq:V_C_induced}.

From $|V_A| = 2\varepsilon$, we see that $|N(v)\cap N_2(u)| = k-1$, meaning there is only one vertex, say $x$, in $N(v)\setminus N_2(u)$. The contradiction will follow by noting the next three equalities. 

The first one is 
\begin{equation}\label{eq:case:k-1:V_C}
    V_C = N(x)\setminus \{v\} \Rightarrow |V_C| = k-1.
\end{equation}
First of all, we shall verify that $N(x)\setminus \{v\} \subseteq V_C$, noting that if there is $w \in N(x)\setminus (\{v\}\cup V_C)$, it means that $w \in N(x)\cap N_2(u)$, thus there is $i\in \{1, \ldots, k\}$ such that $w \in V_2(i)\cap N(v)$ making $d_i\geq 1$, which contradicts \eqref{eq:V_Ag}. And finally let us show that $N(x)\setminus \{v\}$ is exhaustive, meaning there is no $w \in N(v)\setminus\{x\}$ such that $N(w)\setminus (N_2(u)\cup N(u)\cup\{v\}) \neq \emptyset$. Indeed, if there were such a vertex $w \in N(v)\setminus\{x\}$, we would have an extra edge not adjacent to $v$ from $N_2(u)$, contradicting \eqref{eq:outerEdges}, since $|V_A| = 2\varepsilon = |E(N_2(u), \{v\})|$.

The second equality is 
\begin{equation}\label{eq:case:k-1:E_V_CV_C}
    E(V_C, V_C) = \emptyset,
\end{equation}
which holds since any vertex within $V_C$ would induce a $3$-cycle, given that $V_C = N(x)\setminus\{v\}$.

And the third equality is 
\begin{equation}\label{eq:case:k-1:E_V_BV_C}
    E(V_B, V_C) = \emptyset,
\end{equation}
which holds since otherwise we would have an extra edge not adjacent to $v$ from $N_2(u)$, contradicting \eqref{eq:outerEdges}, similarly to arguing the exhaustiveness of $N(v)\setminus \{x\}$.

Now, combining \eqref{eq:case:k-1:V_C},~\eqref{eq:case:k-1:E_V_CV_C} and~\eqref{eq:case:k-1:E_V_BV_C}, we see that 
\begin{equation}
    (k-1)\underbrace{|V_C|}_{k-1} > 2 \underbrace{|E(V_C, V_C)|}_{0} + \underbrace{|E(V_B, V_C)|}_{0}, 
\end{equation}
which clearly shows that inequality \eqref{eq:V_C_induced} can't be an equality, making the case $|V_A| = 2\varepsilon$ impossible.

Consider the case $|V_A| = 2$. Without loss of generality, we may assume $V_A = \{u_1, u_2\}$. Yet again, by way of contradiction, assume that $2Y = k(k-1)^2-2\varepsilon$, instead of being strictly less than $k(k-1)^2-2\varepsilon$, which would similarly imply that inequalities \eqref{eq:V_C_induced},~\eqref{eq:V_B_induced},~\eqref{eq:V_Ag},~\eqref{eq:2epsV_A} and \eqref{eq:E_AB} must, in fact, be equalities. We shall focus on \eqref{eq:2epsV_A} and~\eqref{eq:E_AB}, thus having
\begin{equation} \label{eq:d1_d2}
     d_1+d_2 = 2\varepsilon - 2. 
\end{equation}

Consider $v'' \in N(v)\cap V_2(2)$ and let us consider two separate cases depending on whether there is a vertex in $V_2(1)$ adjacent to~$v''$. It is worth noting that there must exist $i\in \{1, 3, 4,\ldots, k\}$ such that $v''$ is not adjacent to any of the vertices from $V_2(i)$.

Suppose $v''$ is not adjacent to any vertex in $V_2(i)$ for some $i>2$. Then there must be some vertex in $V_2(i)$ that is not adjacent to $V_2(2)$, forcing it to connect instead to $V(G)\setminus(N_2(u)\cup N(u))$. Similarly to the derivation of \eqref{eq:V_Ag}, this implies $d_1+d_2+1+2\leq 2\varepsilon$, which leads to a contradiction with \eqref{eq:d1_d2}.

Suppose $v''$ is not adjacent to any vertex in $V_2(1)$. Following the derivation of  \eqref{eq:d1_d2}, we can see that showing $|\{w \in V_B \cap V_2(1) : N(w) \cap N(v) \subset N_2(u)\}| = 0$ would suffice. By way of contradiction, assume that there is $w \in V_B\cap V_2(1)$ such that $N(w) \cap N(v) \subset N_2(u)$. This leads to a contradiction: we can clearly see that $N(w)\cap N(v)\cap N_2(u) = \emptyset$ due to the main premise of this case ($|V_A|=2$ and $v''$ is not adjacent to any vertex in $V_2(1)$) and the absence of $3$-cycles and $N(w)\cap N(v) \neq \emptyset$ due to $w$ being from $V_B$.

Therefore, we have established \eqref{eq:main_property}.

Similarly to before, consider an arbitrary vertex $u \in V(G)$ and $v \in N_3(u)$ with the goal of bounding from above the number of $5$-cycles containing the vertex $v$, which is denoted by $Y$. Let us also denote the only vertex in $N(v)\cap N_2(u)$ by $v'$ and the only vertex in $N(v')\cap N(u)$ by $u_1$ and say that $N(v)\setminus \{v'\} = \{v_1, \ldots, v_{k-1}\}$.
Then, let us define several sets  
\begin{align} \label{eq:definitions}
     V_A &= N_2(v)\cap N(u) = \{u_1\} \text{ (by \eqref{eq:main_property})},\\
     V_B &= N_2(v)\cap N_2(u), \;
     V_{B'} = V_B\cap N(v'),  \;
     V_{B''} = V_B\setminus N(v'),  \\
     V_C &= N_2(v)\setminus(N(u) \cup N_2(u)), \;
     V_{C'} = V_C\cap N(v'),     \;  V_{C''} = V_C\setminus N(v'),  \\
     L_i &= N(v_i)\cap{V_{C''}}, \; i=1, \ldots, k-1.
\end{align}

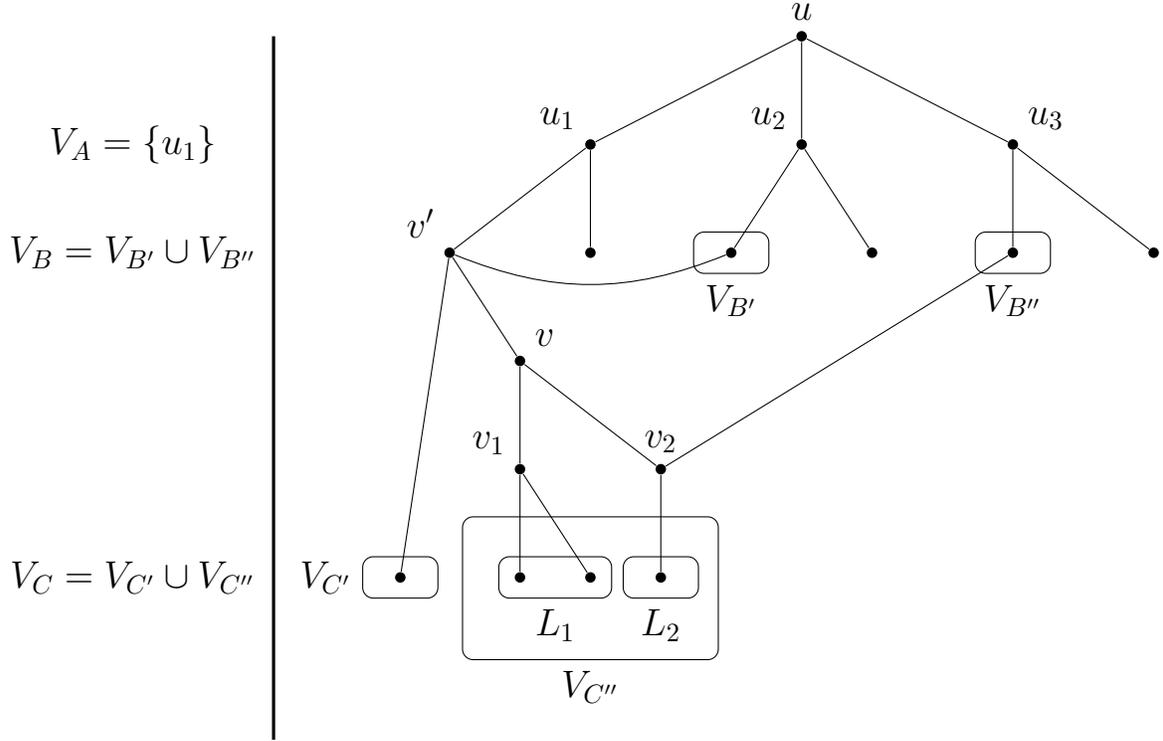
\begin{figure}[ht]
\centering
\begin{tikzpicture}[
    x=1.3cm,y=1cm,
    block/.style={draw,rounded corners,minimum width=1.0cm,minimum height=.55cm},
    dot/.style={circle,fill,inner sep=1.4pt},
    >=Latex,scale=0.72
]
    \node[dot,label=above:{\large $u$}] (u) at (0,0) {};

    \node[dot,label=above left:{\large $u_1$}] (uPrime) at (-3,-2) {};
    \node[dot,label=above left:{\large $u_2$}] (u1) at (0,-2) {};
    \node[dot,label=above right:{\large $u_3$}] (u2) at (3,-2) {};

    \node[dot,label=above left:{\large $v'$}] (vPrime) at (-5,-4) {};
    \node[dot] (v12) at (-3,-4) {};

    \node[dot] (v21) at (-1,-4) {};
    \node[dot] (v22) at (1,-4) {};

    \node[dot] (v31) at (3,-4) {};
    \node[dot] (v32) at (5,-4) {};
    
    \draw (u) -- (uPrime);
    \draw (u) -- (u1);
    \draw (u) -- (u2);

    \draw (uPrime) -- (vPrime);
    \draw (uPrime) -- (v12);

    \draw (u1) -- (v21);
    \draw (u1) -- (v22);

    \draw (u2) -- (v31);
    \draw (u2) -- (v32);

    \node[dot,label=above right:{\large $v$}] (v) at (-4,-6) {};
    \draw (v) -- (vPrime);

    \node[dot,label=above left:{\large $v_1$}] (v1) at (-4,-8) {};
    \node[dot,label=above:{\large $v_2$}] (v2) at (-2,-8) {};
    \draw (v) -- (v1);
    \draw (v) -- (v2);

    \node[dot] (v1Son1) at (-4,-10) {};
    \node[dot] (v1Son2) at (-3,-10) {};

    \draw (v1) -- (v1Son1);
    \draw (v1) -- (v1Son2);
    
    \node[dot] (v2Son1) at (-2,-10) {};
    \draw (v2) -- (v2Son1);

    \node[dot] (vPrimeSon) at (-5.7,-10) {};
    \draw (vPrime) -- (vPrimeSon);

    \draw (v2) -- (v31);
    \draw (vPrime) to[bend right=22] (v21);

    \node[block,label=below:{\large $V_{B''}$}] (D)  at (v31) {};
    \node[block,label=below:{\large $V_{B'}$}] (D)  at (v21) {};
    \node[block,label=left:{\large $V_{C'}$}] (D)  at (vPrimeSon) {};

    \node[block, minimum width=1.5cm, label=below:{\large  $L_1$}] at (-3.5,-10) {};
    \node[block, minimum width=1.0cm, label=below:{\large  $L_2$}] at (v2Son1) {};

    \node[block, minimum width=3.4cm, minimum height=1.9cm, label=below:{\large $V_{C''}$}] at (-3,-10.2) {};

    \draw[very thick] (-7.5,0) -- (-7.5,-13);

    \node at (-9.5,-2) {\large $V_A=\{u_1\}$};
    \node at (-9.5,-4) {\large $V_B=V_{B'} \cup V_{B''} $};
    \node at (-9.5,-10) {\large $V_C=V_{C'} \cup V_{C''} $};
\end{tikzpicture}
\caption{An illustration of the notation used for the case where exactly one vertex of $N_2(u)$ is adjacent to $v$. The displayed graph is a subgraph of $G$, so not all vertices and edges are visualized.}
\label{fig:illustrateNotation}
\end{figure}

We illustrate the notation that we use in Fig.~\ref{fig:illustrateNotation}. The main point of introducing all these sets in \eqref{eq:definitions} is to improve inequalities \eqref{eq:V_C_induced} and \eqref{eq:V_Ag}. We shall do this by careful consideration of the number of edges between these sets, which naturally produce many secondary inequalities. As such, we shall list the important inequalities, show that if they hold true, then $2Y < k(k-1)^2 - 2\varepsilon$, and after having done it, justify the inequalities.

In the same vein, we see that 
\begin{equation}    
      Y =  |E(V_A, V_B)|  + |E(V_B, V_B)| + |E(V_B, V_C)| + |E(V_C, V_C)|   
\end{equation}
and that \eqref{eq:V_B_induced} holds.
 
We now state the inequalities \eqref{eq:V_C_induced_new},  \eqref{eq:new_outer_bound1} and \eqref{eq:new_outer_bound2}, which we will later prove. The improved version of \eqref{eq:V_C_induced} is the following inequality
\begin{multline} \label{eq:V_C_induced_new}    
    (k-1)|V_{C}| \geq 2|E(V_C, V_C)| + |E(V_B, V_C)| \\ + (k-1)(k-1-|V_{C'}|) - |V_{B''}| - |E(V_{C''}, V_B)|.
\end{multline} Moreover, \eqref{eq:V_Ag} splits into 
\begin{equation} \label{eq:new_outer_bound1}
    |V_{B''}| + |V_{C'}| + 1  + |E(V_B, V_{C''})|\leq 2\varepsilon 
\end{equation}
and 
\begin{equation}\label{eq:new_outer_bound2}
    |V_{B''} \cap N(u_1)| + |V_{C'}| + 1 \leq \varepsilon. 
\end{equation}
We can readily see that \eqref{eq:E_AB} translates into
\begin{equation}\label{eq:eq:E_AB_new}
    |E(V_A, V_B)| = |V_{B''}\cap N(u_1)| 
\end{equation}
since $V_A=\{u_1\}$ and $u_1$ cannot have a neighbor in $V_{B'}$ because $G$ has girth $5$.

Thus, combining \eqref{eq:V_B_induced},~\eqref{eq:V_C_induced_new},~\eqref{eq:new_outer_bound1},~\eqref{eq:new_outer_bound2} and \eqref{eq:eq:E_AB_new} we see that 
\begin{multline}\label{eq:finshing_Y}    
    2Y \leq (k-1)(\underbrace{|V_B|+|V_C|}_{k(k-1)-1}) - (k-1)(k-1 - |V_{C'}|) \\ + \underbrace{|V_{B''}| + |E(V_{C''}, V_B)|}_{\leq 2\varepsilon - 1 - |V_{C'}|}+ \underbrace{|V_{B''}\cap N(u_1)|}_{\leq \varepsilon - |V_{C'}| - 1} \\ \leq k(k-1)^2 - (k-1)(k - |V_{C'}|) + 3\varepsilon - 2 |V_{C'}| - 2. 
\end{multline}

We should note that from \eqref{eq:new_outer_bound2} we see that $|V_{C'}|\leq \varepsilon - 1$, thus allowing us to maximize the right hand side of \eqref{eq:finshing_Y} to see that
\begin{equation}
    2Y\leq k(k-1)^2 - (k-1)(k+1-\varepsilon) + \varepsilon < k(k-1)^2 - 2\varepsilon.
\end{equation}
Here, the last inequality holds since $\varepsilon \leq \frac{k-1}{2}$ implies that $\varepsilon < \frac{k^2-1}{k+2}$ for $k\geq 3$, since $\frac{2(k+1)}{k+2}>1$.

It therefore suffices to prove the inequalities \eqref{eq:V_C_induced_new},  \eqref{eq:new_outer_bound1} and \eqref{eq:new_outer_bound2} to show that the theorem holds.

Let us start by showing \eqref{eq:new_outer_bound1}. To that end, we shall establish the following set of identities
\begin{align}
    &|V_{C'}| = |E(V_{C'}, N_2(u))|, \label{eq:Cprime_to_edges}\\
    &|V_A| = 1 = |E(\{v\}, N_2(u))|, \label{eq:A_to_edges}\\
    &|E(V_B, V_{C''})| \leq |E(V_{C''}, N_2(u))|, \label{eq:general_edges} \\
    &|V_{B''}| = |E(N(v)\setminus\{v'\}, N_2(u))|, \label{eq:Vprimeprime_to_edges}
\end{align}
which combined with \eqref{eq:outerEdges} establishes \eqref{eq:new_outer_bound1}.

In order to establish \eqref{eq:Cprime_to_edges} and \eqref{eq:A_to_edges} we note that 
\begin{equation}\label{eq:important_absence}
 E(V_{C'}\cup\{v\}, N_2(u)\setminus\{v'\}) = \emptyset.   
\end{equation}
Indeed, assume, for the sake of contradiction, that there is an edge between $w \in N_2(u)\setminus\{v'\}$ and $w' \in V_{C'}\cup\{v\}$, making $w' \in N(v')$. Thus, we can see that $w, v' \in N_2(u)$ and $w' \in N(w)\cap N(v')\neq\emptyset$. Thus, by virtue of \eqref{eq:main_property} we see that $w$ must equal $v'$, but $w\neq v'$ by definition. A simple corollary of \eqref{eq:important_absence} is that $|E(V_{C'}, N_2(u))| =|V_{C'}|$ and $|E(\{v\}, N_2(u))|=1$.

It is obvious that inequality \eqref{eq:general_edges} follows from monotonicity, using $V_B \subset N_2(u)$. 

Equation \eqref{eq:Vprimeprime_to_edges} follows from the fact that $\forall w\in V_{B''}\exists!v_s\in N(v)\setminus\{v'\}:v_s w\in E$ inducing an injective function $$f_{B''} : V_{B''} \rightarrow E(N(v)\setminus\{v'\}, N_2(u))$$ and the fact that $f_{B''}$ is surjective. The existence follows from the fact that $w\in V_{B''}$ implies $w \in N_2(v)\setminus N(v')$. For the sake of contradiction, assume that for some $w\in V_{B''}$ there are two distinct vertices $v_{s_1}$ and $v_{s_2}$ in $N(v)\setminus\{v'\}$ such that $\{v_{s_1}w, v_{s_2}w\} \subset E$, but then $v, v_{s_1}, w, v_{s_2}$ makes a $4$-cycle, which contradicts the fact that $G$ has girth $5$. Finally, assume that $f_{B''}$ is not surjective, meaning there must be $v_{s} \in N(v)\setminus \{v'\}$ and $w \in N_2(u)$ such that $v_{s}w \in E\setminus f_{B''}(V_{B''})$. We can note that $w$ must be in $V_{B''}$, since $w \in N_2(u)$ and $w\in N_2(v)$ given $v_{s} \in N(v)$. Assume, for the sake of contradiction, that $w \in N(v')$. This assumption introduces a $4$-cycle $v, v', w, v_s$. Thus $w \in V_{B''}$, which shows that $f_{B''}$ is bijective.

Therefore, as stated before, we establish \eqref{eq:new_outer_bound1}
\begin{multline}
    |V_{B''}| + |V_{C'}| + 1  + |E(V_B, V_{C''})| \leq |V_{B''}| + |V_{C'}| + 1  + |E(V_{C''}, N_2(u))| \\ =  |E(N(v)\setminus\{v'\}, N_2(u))|  + |E(V_{C'}, N_2(u))| +  |E(\{v\}, N_2(u))| \\ +  |E(V_{C''}, N_2(u))| \leq |E(V\setminus(N(u)\cup N_2(u)), N_2(u))| = 2\varepsilon.
\end{multline}

Let us shift our attention towards proving \eqref{eq:new_outer_bound2}, to that end, we shall note that 
\begin{align}
    &|V_{C'}| = |E(V_{C'}, N_2(u)\cap N(u_1))|, \label{eq:Cprime_to_edges_more}\\
    &|V_A| = 1 = |E(\{v\}, N_2(u)\cap N(u_1))|, \label{eq:A_to_edges_more}\\
    &|V_{B''} \cap N(u_1)| = |E(N(v)\setminus\{v'\}, N_2(u)\cap N(u_1))|, \label{eq:Vprimeprime_to_edges_more}
\end{align}
which are just refined versions of \eqref{eq:Cprime_to_edges},~\eqref{eq:A_to_edges} and~\eqref{eq:Vprimeprime_to_edges}. Indeed, \eqref{eq:Cprime_to_edges_more} and~\eqref{eq:A_to_edges_more} hold due to \eqref{eq:important_absence}. And equation \eqref{eq:Vprimeprime_to_edges_more} holds because we can see, using a similar argument as before, that $$f_{B''}|_{V_{B''}\cap N(u_1)}: V_{B''}\cap N(u_1) \rightarrow E(N(v)\setminus\{v'\}, N_2(u)\cap N(u_1))$$ is a bijection.

The next step is to establish that 
 \begin{equation} \label{eq:another_edge_count}
  |E(N_2(u)\cap N(u_1), V\setminus(N(u)\cup N_2(u)))| \leq \varepsilon.   
 \end{equation}
To that end, we shall name vertices $N(u)\setminus\{u_1\}$ as $\{u_2, \ldots, u_{k}\}$ and construct a new multi-graph $G'$ with vertices $u_1, u_2, u_3, \ldots, u_{k}$ such that any two vertices $u_i, u_j \in V(G')$ are connected with edge multiplicity $|E(N_2(u)\cap N(u_i), N_2(u)\cap N(u_j))|\leq k-1$. Then we can see that $|E(G')| = \frac{k(k-1)^2}{2} - \varepsilon$, and also we observe that \eqref{eq:another_edge_count} is equivalent to the claim that $\deg_{G'}(u_1) \geq (k-1)^2-\varepsilon$, which holds since otherwise it would mean that $u_1$ misses at least $\varepsilon + 1$ edges, but there are only $\varepsilon$ of those in $G'$. 

Therefore, we establish \eqref{eq:new_outer_bound2} by combining \eqref{eq:Cprime_to_edges_more},~\eqref{eq:A_to_edges_more},~\eqref{eq:Vprimeprime_to_edges_more} and~\eqref{eq:another_edge_count}
\begin{multline}
    |V_{C'}| + 1 + |V_{B''}\cap N(u_1)| = |E(V_{C'}, N_2(u)\cap N(u_1))| + |E(\{v\}, N_2(u)\cap N(u_1))| \\ + |E(N(v)\setminus\{v'\}, N_2(u)\cap N(u_1))| \leq  |E(N_2(u)\cap N(u_1), V\setminus(N(u)\cup N_2(u)))| \leq \varepsilon.
\end{multline}

Let us focus on \eqref{eq:V_C_induced_new}. To that end, mirroring the derivation of \eqref{eq:V_C_induced}, having denoted $V(G)\setminus(V_C \cup V_B \cup V_A)$ by $V_{\text{Out}}$, let us note that
\begin{multline}\label{eq:V_C_prime_induced}
     (k-1)|V_{C'}| = \sum_{w \in V_{C'}} |E(\{w\}, V(G)\setminus \{v'\})| \geq \sum_{w \in V_{C'}} |E(\{w\}, V_{C'} \cup V_{C''} \cup V_B)| \\ = 2 |E(V_{C'}, V_{C'})| + |E(V_{C'}, V_{C''})| + |E(V_B, V_{C'})|   
\end{multline}
and that
\begin{multline}\label{eq:V_C_double_prime_induced}
     (k-1)|V_{C''}| = \sum_{w \in V_{C''}} |E(\{w\}, V(G)\setminus N(v))| \geq \sum_{w \in V_{C''}} |E(\{w\}, V_{C'} \cup V_{C''} \cup V_B \cup V_{\text{Out}})| \\ = 2 |E(V_{C''}, V_{C''})| + |E(V_{C'}, V_{C''})| + |E(V_B, V_{C''})| + |E(V_{C''}, V_{\text{Out}})|,  
\end{multline}
transforming \eqref{eq:V_C_induced} into 
\begin{equation}
    (k-1)|V_C| \geq 2 |E(V_C, V_C)| + |E(V_B, V_C)|  + |E(V_{C''}, V_{\text{Out}})|.
\end{equation}

Moving forward, we shall rely on the following inequalities
\begin{align}
&|E(L_i, V_{C'})| \leq |V_{C'}|,\label{eq:LCprime}\\
&|E(L_i, V_{C''})| \leq |L_i|(k-2).\label{eq:LCdoubleprime}\\  
\end{align}

For the sake of contradiction, assume that \eqref{eq:LCprime} fails to hold, meaning $|E(L_i, V_{C'})| > |V_{C'}|$, which implies the existence of $w \in V_{C'}$ such that there are two distinct vertices $w'_1, w'_2 \in L_i$ adjacent to $w$. This yields the following $4$-cycle $w, w'_1, v, w'_2$.

To establish \eqref{eq:LCdoubleprime}, we shall note that $E(L_i, L_i) = \emptyset$, otherwise it would create $3$-cycles and that $\forall w\in L_i:|E(\{w\}, L_j)| \leq 1, i\neq j$, or else there would be $w'_1, w'_2 \in L_j$ adjacent to $w$ producing a $4$-cycle $w'_1, v, w'_2, w$. Thus making
\begin{equation}
    |E(L_i, V_{C''})| = \sum_{j\neq i} |E(L_i, L_j)| = \sum_{w \in L_i} \sum_{j\neq i} \underbrace{|E(\{w\}, L_j)|}_{\leq 1} \leq \sum_{w \in L_i} \sum_{j\neq i} 1=  |L_i|(k-2).
\end{equation}

We also establish the following identity, involving $V_{\text{Out}}$
\begin{multline} \label{eq:expansion_of_Li}   
    (k-1)|L_i| = \sum_{w \in L_i} |E(\{w\}, V(G)\setminus\{v_i\})|= 2\underbrace{|E(L_i, L_i)|}_{0} \\ +  \underbrace{|E(L_i, V_{C''})|}_{|E(L_i, V_{C''}\setminus L_i)|} + |E(L_i, V_{C'})| + |E(L_i, V_B)| + |E(L_i, V_{\text{Out}})|.
\end{multline}

Let us now take a close look at $|E(V_{C''}, V_{\text{Out}})|$, making use of  \eqref{eq:LCprime},~\eqref{eq:LCdoubleprime} and~\eqref{eq:expansion_of_Li}.
\begin{multline}\label{eq:Vout_bound}
    |E(V_{C''}, V_{\text{Out}})| +  |E(V_{C''}, V_{B})| = |E(V_{C''}, V_B \cup V_{\text{Out}})|  \\ = \sum_{i} |E(L_i, V_B \cup V_{\text{Out}})|  = \sum_{i} \left((k-1)|L_i| - |E(L_i, V_{C''})| - |E(L_i, V_{C'})|\right) \\ \geq \sum_{i} \left((k-1)|L_i| - |L_i|(k-2) - |V_{C'}|\right) = \sum_{i} \left(|L_i| - |V_{C'}|\right).
\end{multline}

Let us investigate the inner structure of $L_i$. We claim that 
\begin{equation}\label{eq:Liproperty1}
\forall i \in \{1, \ldots, k-1\}: |L_i| \geq k-2.
\end{equation}
For the sake of contradiction, assume that $|L_i|\leq k-3$. Then, there are at least two vertices $w_1, w_2 \in N(v_i)\setminus (V_{C''}\cup \{v\})$. We can see that neither of these vertices is in $V_{C'}$, since otherwise it would induce a $4$-cycle $w_j, v', v, v_i, j\in \{1, 2\}$, and neither can be in $V_A$. Thus, both $w_j$ have to be in $V_B \subset N_2(u)$, but then $v_i \in N(w_1)\cap N(w_2)$ and $v_i \notin N(u)\cup N_2(u)$, which is a contradiction to \eqref{eq:main_property}.

Moreover, we can prove that the number of $L_i$'s such that their cardinality is $k-2$ equals exactly $|V_{B''}|$. Indeed, as we established in the previous paragraph, if $|L_i| = k-2$, then there is exactly one vertex $w \in N(v_i)\cap V_B$. We can quickly note that $w \notin V_{B'}$, since otherwise there would be a $4$-cycle $w, v', v, v_i$. And also, by definition of $V_{B''}$, for every vertex $w \in V_{B''}$ it is true that there exists $i \in \{1, \ldots, k-1\}$ such that $w \in N(v_i)$. Moreover, there is only one such $i$, because otherwise if additionally $w \in N(v_j)$, then we would have a $4$-cycle $w, v_i, v, v_j$, proving the claim.

Now, we can refine the upper bound in \eqref{eq:Vout_bound} to the following
\begin{multline}    
    |E(V_{C''}, V_{\text{Out}})| +  |E(V_{C''}, V_{B})| \geq \sum_{i} \left(|L_i| - |V_{C'}|\right) \\ = |V_{B''}|(k-2 - |V_{C'}|) + (k-1 - |V_{B''}|)(k-1 - |V_{C'}|) \\ =  (k-1 )(k-1 - |V_{C'}|) - |V_{B''}| .
\end{multline}

This concludes the proof.

\end{proof}

\section*{Acknowledgements}
\noindent We are grateful to Yurii Yarosh for useful discussions. Jorik Jooken is supported by a Postdoctoral Fellowship of the Research Foundation Flanders (FWO) with grant number 1222524N. 

\bibliographystyle{abbrv}
\typeout{}
\bibliography{references}

\end{document}